\documentclass{article}
\usepackage[%
journal=XXX,    
lang=british,   
]{ems_journal}

\numberwithin{equation}{section}

\theoremstyle{plain}
\newtheorem{theorem}{Theorem}[section]
\newtheorem{lemma}[theorem]{Lemma}
\theoremstyle{definition}
\newtheorem{definition}{Definition} 
\newtheorem{example}[theorem]{Example}

\def\opn#1#2{\def#1{\operatorname{#2}}} 
\opn\chara{char} \opn\length{\ell} \opn\pd{pd} \opn\rk{rk}
\opn\projdim{proj\,dim} \opn\injdim{inj\,dim} \opn\rank{rank}
\opn\depth{depth} \opn\Min{Min} \opn\supp{supp} \opn\grade{grade} \opn\height{height}
\opn\embdim{emb\,dim} \opn\codim{codim}

\opn\Tr{Tr} \opn\bigrank{big\,rank}
\opn\superheight{superheight}\opn\lcm{lcm}
\opn\trdeg{tr\,deg}
\opn\reg{reg} \opn\lreg{lreg} \opn\ini{in} \opn\lpd{lpd}
\opn\size{size}\opn\bigsize{bigsize}
\opn\cosize{cosize}\opn\bigcosize{bigcosize}
\opn\sdepth{sdepth}\opn\sreg{sreg}
\opn\link{link}\opn\fdepth{fdepth}

\begin{document}

\title{Some Properties of Path and Closed Neighborhood Ideals}
\titlemark{Some Properties of Path and Closed Neighborhood Ideals 
    }


%

\emsauthor{1}{
	\givenname{Hafsa}
	\surname{Bibi}
	\orcid{0009-0001-3042-8871}}{H.~Bibi}
\emsauthor{2}{
	\givenname{Azhar}
	\surname{Farooq}
	\mrid{}
	\zblid{}
	\orcid{0009-0009-8710-4673}}{A.~Farooq}
\emsauthor{3}{
	\givenname{Hanni}
	\surname{Garminia}
	\mrid{}
	\zblid{}
	\orcid{0009-0001-8949-4961}}{H.~Garminia}
\emsauthor{4}{
	\givenname{Irawati}
	\mrid{}
	\zblid{}
	\orcid{0009-0001-7167-0269}}{Irawati}    

\Emsaffil{1}{
	\department{Doctoral Mathematics, Faculty of Mathematics and Natural Sciences}
	\organisation{Institut Teknologi Bandung}
	\rorid{ABC}
	\address{Jalan Ganesa 10}
	\zip{40132}
	\city{Bandung}
	\country{Indonesia}
	\affemail{30124701@mahasiswa.itb.ac.id}}
\Emsaffil{2}{
	\department{1}{Abdus Salam School of Mathematical Sciences}
	\organisation{1}{Government College University}
	\rorid{1}{}
	\address{1}{68-B, New Muslim Town}
	\zip{1}{54600}
	\city{1}{Lahore}
	\country{1}{Pakistan}
	\department{2}{} 
	\organisation{2}{}%
	\rorid{2}{}
	\address{2}{}%
	\zip{2}{}
	\city{2}{}
	\country{2}{} 
	\affemail{2}{azhar.farooq@sms.edu.pk}}
\Emsaffil{3}{
	\department{Algebra Research Group, Faculty of Mathematics and Natural Sciences, Institut Teknologi Bandung}
	\organisation{Institut Teknologi Bandung}
	\rorid{}
	\address{Jalan Ganesa 10}
	\zip{40132}
	\city{Bandung}
	\country{Indonesia}
	\affemail{garminia@itb.ac.id}}
\Emsaffil{4}{
	\department{Algebra Research Group, Faculty of Mathematics and Natural Sciences}
	\organisation{Institut Teknologi Bandung}
	\rorid{}
	\address{Jalan Ganesa 10}
	\zip{40132}
	\city{Bandung}
	\country{Indonesia}
	\affemail{irawati@itb.ac.id}}    

\classification[13F70, 05E40, 13E05, 13B25]{13C15, 13C05, 13-04}

\keywords{Associated primes, Depth, Dimension, v-Number, Path Ideal, Closed neighborhood ideal}

\begin{abstract}
 In this paper, we investigate algebraic properties of two classes of monomial ideals associated with graphs, namely path ideals and closed neighborhood ideals. Our primary focus is on the strong persistence property and several homological invariants, including depth and the $v$-number. We first prove that the path ideal of length $n-1$ associated with a path graph on $n\geq4$ vertices attached to an arbitrary graph satisfies the strong persistence property. We then study closed neighborhood ideals and establish the strong persistence property for graphs containing a leaf vertex. Furthermore, we identify a broader class of graphs satisfying this property, namely those whose vertex set admits a decomposition $V(G)=C\sqcup\{v\}\sqcup X,$ where $C$ is a nonempty clique, every vertex of $C$ is adjacent to each vertex of $X\cup\{v\}$, and the vertex $v$ has no neighbors in $X$. In addition, we investigate algebraic invariants of ideals associated with Helm graphs. Specifically, we compute the depth of the edge ideal and determine the Krull dimension, the depth, and the $v$-number of the closed neighborhood ideal of the Helm graph. These results contribute to a deeper understanding of the interplay between graph-theoretic structures and the algebraic properties of their associated monomial ideals.
\end{abstract}

\maketitle

\section{Introduction}


\indent \indent Let \(K\) be a field and
\(
\mathcal{A}=K[x_1,\dots,x_N]
\)
be the polynomial ring. For a monomial ideal \(I \subseteq \mathcal{A}\), we denote by \(\mathcal{G}(I)\) its unique minimal monomial generating set and by \(\operatorname{Ass}_\mathcal{A}(\mathcal{A}/I)\) the set of associated prime ideals of \(\mathcal{A}/I\).

A classical theorem of Brodmann \cite{Bro79} states that the sequence $\{\operatorname{Ass}_\mathcal{A}(\mathcal{A}/I^k)\}_{k\ge1}$ stabilizes for sufficiently large \(k\). This motivated the study of the persistence property. A monomial ideal \(I\) is said to satisfy the \emph{persistence property} if
\[
\operatorname{Ass}_\mathcal{A}(\mathcal{A}/I^k)\subseteq \operatorname{Ass}_\mathcal{A}(\mathcal{A}/I^{k+1})
\quad \text{for all } k\ge1.
\]
A stronger condition is the \emph{strong persistence property}, defined by
\[
(I^{k+1}:_\mathcal{A}I)=I^k
\quad \text{for all } k\ge1.
\]
It is known that strong persistence implies persistence \cite{HQ15}, while the converse does not hold in general \cite{MMV12}. Furthermore, some ideals do not satisfy even the persistence property, as shown in \cite{MMV12,RNA18}. Several classes of combinatorially defined monomial ideals satisfy the strong persistence property, including edge ideals, cover ideals of perfect graphs, and polymatroidal ideals; see \cite{FHT11,HRV13,HQ15,RT17}. Further classes such as \emph{unisplit ideals}, \emph{separable ideals}, \emph{normal ideals}, and path ideals of centipede-related graphs have also been shown to exhibit this property \cite{RNA18, BRU24}.

The closed neighborhood ideal of a graph was introduced by by Sharifan and Moradi in \cite{Sha+20} and has recently attracted considerable attention. Let \(G=(V(G),E(G))\) be a simple graph with vertex set $V(G)=\{x_1,x_2,\dots,x_n\}.$ For a vertex \(x_i\in V(G)\), the closed neighborhood of \(x_i\) is defined as $N[x_i]=\{x_i\}\cup \{x_j\in V(G):\{x_i,x_j\}\in E(G)\}.$ The closed neighborhood ideal of \(G\), denoted by \(CNI(G)\), is the monomial ideal generated by the products of variables corresponding to the closed neighborhoods of the vertices of \(G\). More precisely,
\[
CNI(G)=\left(\prod_{x_j\in N_G[x_i]}x_j : x_i\in V(G)\right)
\subseteq \mathcal{A},
\]
where $\mathcal{A}=K[x_1,x_2,\dots,x_N]$ is the polynomial ring over a field \(K\). Various algebraic properties of these ideals, including normality, Cohen--Macaulayness, regularity, and persistence properties, have been studied in \cite{Nas+22}, \cite{Nas+25}, \cite{Mor+26}, \cite{Olt+26}.

The \emph{depth} of a finitely generated $\mathcal{A}$-module $M$ is the length of a maximal $M$-regular sequence in $\mathcal{A}$. The study of the depth of monomial ideals and their powers has attracted considerable attention. Brodmann \cite{2Bro79} proved that $\depth(\mathcal{A}/I^n)$ is constant for all sufficiently large $n$. Then $depth(\mathcal{A}/I)$ has been studied by various authors (see for example \cite{Susan}, \cite{MCf}, \cite{Zhu16}).The depth of the edge ideal has been computed for several graph families. In \cite{BRU24}, the authors determined the depth of the edge ideal of the centipede graph, while in \cite{BGIRA} the corresponding result was obtained for the Sunlet graph. Motivated by these works, we compute the depth of the edge ideal of the Helm graph (Theorem~\ref{edge helm}).

The \emph{$v$-number} of a monomial ideal $I$ is defined by
\[
v(I)=\min\left\{\deg(u)\;\middle|\;
u \text{ is a monomial and }(I:u)\in\operatorname{Ass}(\mathcal{A}/I)\right\}.
\]
The $v$-number was introduced by Cooper et al. \cite{vnumber} in connection with the study of Reed--Muller-type codes. Jaramillo and Villarreal \cite{JV} investigated the $v$-number of edge ideals of clutters. More recently, Biswas and Mandal \cite{biswas} obtained explicit formulas for the $v$-number of edge ideals of several classes of graphs, including paths, cycles, the $1$-clique sum of a path and a cycle, the $1$-clique sum of two cycles, and the join of two graphs.

This paper is organized as follows. We investigate persistence properties and homological invariants of path ideals and closed neighborhood ideals associated with graphs. In Section~\ref{SP path attach}, we prove that the path ideal of length \(n-1\) corresponding to a path graph on \(n\ge4\) vertices attached to an arbitrary graph satisfies the strong persistence property (Theorem~\ref{thm path attach}). In Section~\ref{SP of neighborhood ideal}, we establish the strong persistence property for closed neighborhood ideals of graphs containing a vertex of degree one (Theorem~\ref{degree-one-vertex}). We further extend this result to a broader family of graphs whose vertex set admits a decomposition
\[
V(G)=C\sqcup\{v\}\sqcup X,
\]
where \(C\) is a nonempty clique, every vertex of \(C\) is adjacent to each vertex of \(X\cup\{v\}\), and the vertex \(v\) has no neighbors in \(X\) (Theorem~\ref{separable}). Finally, in Section~\ref{helm}, we investigate the edge ideal and the closed neighborhood ideal of the Helm graph. We compute the depth of the edge ideal and the depth of the closed neighborhood ideal (Theorems~\ref{edge helm} and~\ref{closed helm}), determine the \(v\)-number of the closed neighborhood ideal (Theorem~\ref{vnumber helm}). The computational results presented throughout the paper were obtained using the computer algebra systems \textsc{Singular}~\cite{S} and \textsc{Macaulay2}~\cite{Mac}.

\section{Attaching a Path Graph with an Arbitrary Graph } \label{SP path attach}

The path ideals of graphs were first introduced and studied by Conca et al. \cite{con+99} in 1999.
\begin{theorem} \label{thm path attach}
Let $G$ be a simple graph on vertex set $V(G)$ and let $H$ be the graph obtained by attaching a path $P_a$ of length a ($a \ge 4$) by identifying one of its endpoints with a fixed vertex of $G$. 
Let $b$ be a positive integer such that 
\[
1 \le b \le \left\lfloor \frac{a-1}{2} \right\rfloor,
\]
and let $I_b(H)$ be the $b$-path ideal of $H$, i.e., the monomial ideal generated by all squarefree monomials $x_{v_1} x_{v_2} \cdots x_{v_{b+1}}$ for which $v_1, v_2, \dots, v_{b+1}$ form a path of length $b$ in $H$. Then the following hold:

\begin{enumerate}[label=(\roman*)]
        \item  $I_b(H)$ admits a strongly superficial element;
        \item $I_b(H)$ satisfies the persistence property;
        \item $I_b(H)$ satisfies the strong persistence property.
    \end{enumerate}
\end{theorem}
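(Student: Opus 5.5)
We plan to use the superficial-element machinery of Rajaee–Nasernejad–Al-Ayyoub (the criterion in \cite{RNA18}). A monomial $f\in\mathcal{G}(I)$ is called \emph{strongly superficial} for $I$ if
\[
(I^{k+1}:_{\mathcal{A}} f)=I^{k}\quad\text{for all } k\ge1 .
\]
If $I$ has such an element, then
\[
I^k\subseteq (I^{k+1}:I)\subseteq (I^{k+1}:f)=I^k ,
\]
so (iii) holds. Strong persistence implies persistence by \cite{HQ15}, which gives (ii). Thus everything reduces to (i): we must exhibit one generator $f$ of $I_b(H)$ with $(I_b(H)^{k+1}:f)=I_b(H)^k$.

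\textbf{Choice of $f$.} Label the attached path $v_0,v_1,\dots,v_a$, where $v_0$ is the vertex identified with $G$ and $v_a$ is the free leaf. Take $f=x_{v_a}x_{v_{a-1}}\cdots x_{v_{a-b}}$, the unique $b$-path ending at the leaf. The constraint $b\le\lfloor (a-1)/2\rfloor$ gives $2b+1\le a$. Hence every $b$-path meeting $\supp(f)$ lies entirely inside $\{v_{a-2b},\dots,v_a\}$, which avoids $v_0$ and so avoids $G$. Consequently the only generators sharing a variable with $f$ are the path-generators $g_j=x_{v_j}\cdots x_{v_{j+b}}$ with $a-2b\le j\le a-b$. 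In particular $x_{v_a}$ appears only in $f$.

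\textbf{Proving $(I^{k+1}:f)\subseteq I^k$.} The inclusion $\supseteq$ is clear. Since $I$ is monomial, it suffices to take a monomial $u$ with $uf\in I^{k+1}$. Write
\[
uf = h\, m_1\cdots m_{k+1}, \qquad m_i\in\mathcal{G}(I).
\]
If some $m_i=f$, cancel it to get $u\in I^k$. Otherwise, each $m_i$ meeting $\supp(f)$ is some $g_j$ with $j<a-b$. We then run an exchange argument along the segment $v_{a-2b},\dots,v_a$, inducting on the number of such $m_i$.

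The exchange step goes as follows. Choose the $m_i=g_j$ with $j$ maximal. Its variables $x_{v_{a-b}},\dots,x_{v_{j+b}}$ overlap $f$. We want to rewrite
\[
g_j\cdot(\text{the remaining part of } f)
\]
so that a full copy of $f$ appears among the factors, with the other factors still products of generators or absorbed into $h$. Concretely, the leftover variables $x_{v_{j+b+1}},\dots,x_{v_a}$ of $f$ are not covered by $g_j$, so they must divide $h$ times other factors. We would then trade $g_j$ for a shifted generator $g_{j'}$ with $j'<j$, or show that $u$ already contains $g_j$'s contribution. A cleaner alternative is to compare degrees in $x_{v_a}$ and argue by the standard "leaf-path" lemma, as for centipede graphs in \cite{BRU24}. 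That lemma says that if a generator contains a leaf variable appearing in no other generator, then it is strongly superficial exactly when a linear-quotient-type condition holds on its neighbours.

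\textbf{Main obstacle.} The hard step is the combinatorial bookkeeping in the exchange step when several overlapping $g_j$ occur with multiplicities. Here $a\ge4$ and $2b+1\le a$ are exactly what guarantees room to shift generators toward $v_0$ without reaching $G$. We would first try induction on $k$ together with the $x_{v_a}$-degree of $u$: writing $u=x_{v_a}^{s}u'$, we reduce to the case $s=0$. In that case $uf$ has $x_{v_a}$-degree $1$, so at most one factor may contain $x_{v_a}$, and that factor must be $f$. If none of the $m_i$ contains $x_{v_a}$, then $x_{v_a}\mid h$, and we can replace $h\,m_1\cdots m_{k+1}$ by showing $m_1\cdots m_{k+1}$ divides something in $I^{k}$ times $u$. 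The latter follows since $u\cdot(f/x_{v_a})$ is divisible by $m_1\cdots m_{k+1}$, and dropping one factor meeting $f$ gives $u\in I^k$.
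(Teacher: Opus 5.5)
Your reduction to (i), your argument that (iii) and then (ii) follow from it, and your choice of $f$ (the $b$-path ending at the free leaf) all agree with the paper. The opening case split is also fine. If some $m_i=f$, cancel it. Otherwise every power of $x_{v_a}$ lies in $h$, which is what justifies your reduction to $s=0$, though that reduction is not needed.

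\textbf{The gap.} The central inclusion $(I^{k+1}:f)\subseteq I^k$ is never actually established. Your closing claim, ``dropping one factor meeting $f$ gives $u\in I^k$'', does not hold for an arbitrary such factor: to drop $m_i$ you need $f\mid h\,m_i$. For instance, take $b=2$, $a=5$, $f=x_{v_3}x_{v_4}x_{v_5}$ and $u=x_{v_1}x_{v_2}^2x_{v_3}$. Then $uf=x_{v_5}\,g_1g_2$, and dropping $g_1$ fails because $x_{v_4}\nmid x_{v_5}g_1$. The ``leaf-path lemma'' you offer as an alternative is not a stated result, and you do not verify its hypothesis, so it cannot fill the gap. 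What you call the main obstacle, multiplicity bookkeeping in an exchange argument, is not an obstacle once the right factor is chosen.

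\textbf{The missing observation.} You almost wrote it down. Take the factor $m_i=g_j$ with $j$ maximal. The variables of $f$ not covered by $g_j$ are $x_{v_{j+b+1}},\dots,x_{v_a}$. Each of these occurs only in generators $g_{j'}$ with $j'\ge j+1$. By maximality of $j$, and since $f$ is not a factor, no $m_{i'}$ contains any of them. So they must divide $h$ itself, not ``$h$ times other factors'' as you wrote. Hence $f\mid h\,g_j$, and $u=(h g_j/f)\prod_{i'\neq i} m_{i'}\in I^k$. If no factor meets $\supp(f)$, then $f\mid h$ and $u\in I^{k+1}$. No trading of shifted generators and no induction on $k$ or on multiplicities is needed.

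\textbf{Relation to the paper.} This is the paper's argument in different labelling. There the leaf is vertex $1$ and $f=m_1=x_1\cdots x_{b+1}$. The paper scans $x_1,x_2,\dots$ in turn and shows each lies in the coefficient monomial until the first index $i$ for which $m_i$ is among the factors. At that point $m_i$ covers the remaining product $x_i\cdots x_{b+1}$, and it can be cancelled.
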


\begin{proof}
Recall that for a monomial ideal, the existence of a strongly superficial element implies both the persistence and the strong persistence properties; see \cite{Nas14, RNA18}. Thus, it suffices to prove statement~(i).

Label the vertices of the attached path $P_a$ consecutively by $1,2,\dots,a$, where the vertex $a$ is identified with a vertex of $G$. Let $n = |V(G)| - 1$ and let the polynomial ring be
\[
\mathcal{A} = K[x_1,\dots,x_a, x_{a+1},\dots, x_{a+n}].
\]
Set $I = I_b(H)$. For each $k \in \{1,\dots,a-b\}$, define the monomial
\[
m_k = x_k x_{k+1} \cdots x_{k+b}.
\]
These are exactly the generators of $I$ corresponding to paths of length $b$ that are entirely contained in the attached path $P_a$. Because $a \ge 2b+1$, the first $b+1$ such monomials $m_1,\dots,m_{b+1}$ are well defined.

Observe that for each $i \in \{1,\dots,b+1\}$, the variable $x_i$ appears in the minimal generating set $\mathcal{G}(I)$ only in the monomials $m_1,\dots,m_i$. Moreover, if $u \in \mathcal{G}(I) \setminus \{m_1,\dots,m_{b+1}\}$, then
\[
\supp(u) \cap \{x_1,\dots,x_{b+1}\} = \emptyset.
\]
Indeed, any path of length $b$ in $H$ that is not contained in $P_a$ must pass through the vertex $a$, which is adjacent to $a-1$. Since the distance from $a$ to vertex $1$ is $a-1$, and $b \le \lfloor (a-1)/2 \rfloor$, such a path cannot reach the initial segment $\{1,\dots,b+1\}$. Thus the variables $x_1,\dots,x_{b+1}$ are confined to the generators $m_1,\dots,m_{b+1}$.

We claim that the monomial
\[
m_1 = x_1 x_2 \cdots x_{b+1}
\]
is a strongly superficial element of $I$. By definition, this means that for every integer $p \ge 1$,
\[
(I^{p+1} : m_1) = I^p.
\]
The inclusion $I^p \subseteq (I^{p+1} : m_1)$ is immediate because $m_1 \in I$. To establish the reverse inclusion, let $f \in (I^{p+1} : m_1)$ be a monomial. (Since the colon ideal of a monomial ideal is monomial, it is enough to consider monomials $f$.) Then $f m_1 \in I^{p+1}$, so there exist a monomial $g \in \mathcal{A}$ and generators $h_1,\dots,h_{p+1} \in \mathcal{G}(I)$ such that
\begin{equation}\label{eq:factor}
f m_1 = g \cdot h_1 h_2 \cdots h_{p+1}.
\end{equation}

If $h_j = m_1$ for some $j$, then cancelling $m_1$ yields $f \in I^p$. Therefore we assume that $m_1 \notin \{h_1,\dots,h_{p+1}\}$.

Since $x_1$ appears only in the generator $m_1$, the left-hand side of \eqref{eq:factor} is divisible by $x_1$, while among the $h_j$ none equals $m_1$. Hence $x_1$ must divide the coefficient monomial $g$. Write $g = x_1 g_1$. Cancelling $x_1$ yields
\begin{equation}\label{eq:step1}
f \, x_2 \cdots x_{b+1} = g_1 \cdot h_1 \cdots h_{p+1}.
\end{equation}

We now proceed by induction on $i \in \{2,\dots,b+1\}$. Suppose that for some $i$ we have constructed an expression
\begin{equation}\label{eq:inductive}
f \, x_i x_{i+1} \cdots x_{b+1} = g_{i-1} \cdot H_1 \cdots H_{p+1},
\end{equation}
where each $H_j \in \mathcal{G}(I)$ and none of the $H_j$ belongs to $\{m_1,\dots,m_{i-1}\}$. (The base case $i=2$ is precisely \eqref{eq:step1} with $H_j = h_j$, since $m_1$ is excluded by assumption.)

Consider the variable $x_i$ on the left-hand side. By the support observation, the only generators in $\mathcal{G}(I)$ that contain $x_i$ are $m_1,\dots,m_i$. Because the $H_j$'s avoid $m_1,\dots,m_{i-1}$, the only possible generator among them that can provide the factor $x_i$ is $m_i$. We distinguish two cases.

\smallskip
\noindent \textit{Case 1:} $m_i \in \{H_1,\dots,H_{p+1}\}$.
Without loss of generality, assume $H_1 = m_i = x_i x_{i+1} \cdots x_{i+b}$. Substituting this into \eqref{eq:inductive} gives
\[
f \, x_i x_{i+1} \cdots x_{b+1} = g_{i-1} \cdot (x_i x_{i+1} \cdots x_{i+b}) \cdot H_2 \cdots H_{p+1}.
\]
Since $i+b > b+1$, the monomial $x_{b+2} \cdots x_{i+b}$ is well defined. Cancelling the common factor $x_i x_{i+1} \cdots x_{b+1}$ yields
\[
f = \bigl( g_{i-1} \cdot x_{b+2} \cdots x_{i+b} \bigr) \cdot H_2 \cdots H_{p+1}.
\]
The right-hand side is a monomial times a product of $p$ minimal generators of $I$. Hence $f \in I^p$, and the induction stops.

\smallskip
\noindent \textit{Case 2:} $m_i \notin \{H_1,\dots,H_{p+1}\}$.
In this case, none of the $H_j$ contains $x_i$. Consequently, the variable $x_i$ on the left-hand side of \eqref{eq:inductive} must divide the coefficient monomial $g_{i-1}$. Write $g_{i-1} = x_i g_i$ and obtain
\[
f \, x_{i+1} \cdots x_{b+1} = g_i \cdot H_1 \cdots H_{p+1}.
\]
This is exactly the form \eqref{eq:inductive} with $i$ replaced by $i+1$, and the induction hypothesis is preserved.

The induction proceeds for $i = 2, 3, \dots, b+1$. If at any step Case~1 occurs, we obtain $f \in I^p$ immediately. If Case~2 persists until $i = b+1$, we reach the equation
\[
f = g_{b+1} \cdot H_1 \cdots H_{p+1},
\]
which directly implies $f \in I^{p+1} \subseteq I^p$. In all cases, we conclude $f \in I^p$. Therefore
\[
(I^{p+1} : m_1) \subseteq I^p,
\]
and equality follows. Thus $m_1$ is a strongly superficial element of $I$, completing the proof.
\end{proof}

\section{Strong Persistence Property of Some Classes of Closed Neighborhood Ideals} \label{SP of neighborhood ideal}

\subsection{Closed Neighborhood Ideals of Graphs with a Degree-One Vertex}

\begin{theorem}\label{degree-one-vertex}
Let $G$ be a graph containing a vertex of degree one. Then the closed neighborhood ideal $CNI(G)$ admits a strongly superficial element of degree $1$. In particular, $CNI(G)$ satisfies the strong persistence property.
\end{theorem}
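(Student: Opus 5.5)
The plan is to take the generator coming from the leaf as the strongly superficial element, and then repeat, in shorter form, the cancellation argument used in the proof of Theorem~\ref{thm path attach}. Here ``degree $1$'' is read in the Rees-algebra sense: the element lies in $I$ itself, i.e.\ it is a generator of $I$, so the assertion makes sense even though that generator is a quadratic monomial. Once the element is found, the strong persistence property follows from the fact, recalled in the proof of Theorem~\ref{thm path attach}, that a strongly superficial element implies strong persistence \cite{Nas14, RNA18}.

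\textbf{Step 1: the leaf generator.} Let $x$ be a vertex of degree one, with unique neighbour $y$, and set $I=CNI(G)$. Then $N[x]=\{x,y\}$, so $m=xy\in I$.

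\textbf{Step 2: $x$ occurs in only one minimal generator.} A vertex $z$ has $x\in N[z]$ only when $z=x$ or $z=y$. The generator attached to $y$ is the product over $N[y]$, which contains both $x$ and $y$. It is therefore divisible by $xy$, so it either equals $xy$ or is not minimal. Hence $xy$ is the only element of $\mathcal{G}(I)$ whose support contains $x$.

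\textbf{Step 3: the colon computation.} I will show $(I^{p+1}:xy)=I^p$ for every $p\ge 1$. The inclusion $\supseteq$ holds because $xy\in I$. For the reverse inclusion, it suffices to treat a monomial $f$ with
\[
fxy=g\,h_1\cdots h_{p+1},\qquad h_j\in\mathcal{G}(I),
\]
where $g$ is a monomial.
\begin{enumerate}[label=(\alph*)]
\item If some $h_j=xy$, cancelling it gives $f\in I^p$.
\item Otherwise, by Step~2 no $h_j$ is divisible by $x$. So $x$ divides $g$; writing $g=xg'$ gives $fy=g'h_1\cdots h_{p+1}$.
\begin{enumerate}[label=(\roman*)]
\item If $y$ divides $g'$, then $f\in I^{p+1}\subseteq I^p$.
\item If not, $y$ divides some $h_j$, say $h_1$. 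Cancelling gives $f=g'(h_1/y)h_2\cdots h_{p+1}\in I^p$.
\end{enumerate}
\end{enumerate}
In every case $f\in I^p$, so $xy$ is a strongly superficial element of degree one.

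\textbf{Main obstacle.} The only delicate point is Step~2: checking that no other minimal generator involves $x$. This rests on the fact that the generator attached to $y$ is redundant, being a multiple of $xy$ whenever $N[y]\supsetneq\{x,y\}$. Everything after that is routine cancellation.
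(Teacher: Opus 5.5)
Your proof is correct and is essentially the paper's argument. You take the leaf generator $xy$ (the paper's $x_vx_u$), observe that $x$ lies in no other minimal generator, and cancel $x$ and then $y$ from $f\cdot xy=g\,h_1\cdots h_{p+1}$ just as the paper does; the only differences are that you swap the order of the two subcases for $y$ and that your Step~2 spells out the uniqueness check the paper states only briefly.
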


\begin{proof}
Let $v$ be a leaf of $G$ and let $u$ be its unique neighbor. Then $v$ appears only in the closed neighborhoods $N[v]$ and $N[u]$, and clearly $u_v=x_vx_u$ divides $u_u$. Hence $u_v$ is a minimal generator of $CNI(G)$.

Let $CNI(G)=I$ and set $g_v=x_vx_u$. We claim that
\[
(I^{k+1}:g_v)=I^k \quad \text{for all } k\ge 1.
\]

The inclusion $I^k \subseteq (I^{k+1}:g_v)$ is immediate, since $g_v\in I$.

For the reverse inclusion, let $m\in (I^{k+1}:g_v)$. Then $mg_v\in I^{k+1}$, so there exist minimal generators $g_1,\dots,g_{k+1}\in \mathcal{G}(I)$ and a monomial $f\in \mathcal{A}$ such that
\[
mg_v = f g_1 \cdots g_{k+1}.
\]

If $g_i=g_v$ for some $i$, then canceling $g_v$ yields $m\in I^k$.

Assume now that $g_i\neq g_v$ for all $i$. Since $x_v \mid mg_v$ and no $g_i$ is divisible by $x_v$, it follows that $x_v \mid f$. Write $f=x_v f'$. Then
\[
m x_u = f' g_1 \cdots g_{k+1}.
\]

If $x_u$ divides some $g_i$, say $g_{k+1}=x_u g'$, then canceling $x_u$ gives
\[
m = f' g' g_1 \cdots g_k \in I^k.
\]

Otherwise, $x_u \nmid g_i$ for all $i$, and hence $x_u \mid f'$. Writing $f'=x_u f''$ and canceling $x_u$, we obtain
\[
m = f'' g_1 \cdots g_{k+1} \in I^{k+1} \subseteq I^k.
\]

Thus $(I^{k+1}:g_v)\subseteq I^k$, proving the claim. Therefore, $g_v$ is a strongly superficial element of degree $1$, and the result follows.
\end{proof}

\subsection{Separable Closed Neighborhood Ideals of Graphs}
\begin{definition}
Let \(\mathcal{A} = K[x_1,\dots,x_n]\) be a polynomial ring over a field \(K\), and let \(I \subseteq \mathcal{A}\) be a monomial ideal with unique minimal set of monomial generators \(\mathcal{G}(I) = \{u_1,\dots,u_m\}\).  
We say that \(I\) is \emph{separable} if there exists an index \(i\) (with \(1 \le i \le m\)) and monomials \(w, g \in \mathcal{A}\) such that
\begin{itemize}
    \item \(w \neq 1\),
    \item \(\gcd(w,g) = 1\),
    \item \(u_i = w \cdot g\),
    \item for every \(j \neq i\), \(\gcd(u_j, u_i) = w\).
\end{itemize}
The generator \(u_i\) is called a \emph{split generator} of the separable ideal.
\end{definition}

\begin{theorem} \cite[Theorem 2.11]{P24}
    Every separable ideal has the strong persistence property.
\end{theorem}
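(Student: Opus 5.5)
We show that the split generator $u_i$ is a strongly superficial element of $I$, i.e.\ that
\[
(I^{k+1}:u_i)=I^k \quad\text{for all } k\ge 1 .
\]
As recalled in the proof of Theorem~\ref{thm path attach}, this implies the strong persistence property \cite{Nas14, RNA18}. The inclusion $I^k\subseteq (I^{k+1}:u_i)$ is clear because $u_i\in I$. Since colon ideals of monomial ideals are monomial, for the reverse inclusion it suffices to take a monomial $m$ with $mu_i\in I^{k+1}$ and write
\[
m\,w\,g = f\,h_1\cdots h_{k+1},\qquad h_j\in\mathcal{G}(I),
\]
for some monomial $f$. If some $h_j=u_i$, cancelling gives $m\in I^k$ directly. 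So we assume every $h_j\neq u_i$.

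Now use the separability conditions. For $j\ne i$ we have $\gcd(u_j,u_i)=w$, so $w\mid u_j$ and $u_j$ meets $g$ only inside $w$. Since $\gcd(w,g)=1$, this forces $\gcd(u_j,g)=1$. Hence no $h_j$ shares any variable with $g$, so $g$ is coprime to $h_1\cdots h_{k+1}$. Because $g$ divides the right-hand side, we get $g\mid f$. Write $f=gf'$ and cancel $g$ to obtain
\[
m\,w=f'\,h_1\cdots h_{k+1}.
\]
Now $w\mid h_{k+1}$ (since $h_{k+1}=u_j$ for some $j\ne i$), so $h_{k+1}=w\,h'$. Cancelling $w$ gives
\[
m=f'\,h'\,h_1\cdots h_k\in I^k,
\]
as required.

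The main obstacle is justifying these divisibility claims with exponents, not just supports. The fact $\gcd(u_j,g)=1$ only uses supports, so the cancellation $g\mid f$ is safe. The step $w\mid u_j$ is exact because $\gcd(u_j,u_i)=w$ literally means $w$ divides $u_j$. It is also worth noting where $w\ne 1$ enters: it guarantees $u_i$ is not a minimal generator dividing all the others trivially. The argument itself does not strictly need it, since the cancellation works for any $w$. I would check that no use of minimality beyond $u_i\in\mathcal{G}(I)$ is needed.
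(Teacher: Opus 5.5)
Your proof is correct. Note that the paper gives no proof of this statement: it only quotes \cite[Theorem 2.11]{P24}, so there is no in-paper argument to match.

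What you supply is a self-contained proof using the device the paper itself uses in Theorems~\ref{thm path attach} and~\ref{degree-one-vertex}: you show that a distinguished generator is strongly superficial, i.e.\ $(I^{k+1}:u_i)=I^k$. The passage to strong persistence then needs no citation, since $I^k\subseteq (I^{k+1}:I)\subseteq (I^{k+1}:u_i)=I^k$.

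Your key deductions hold. $\gcd(u_j,g)$ divides both $u_j$ and $u_i=wg$, so it divides $\gcd(u_j,u_i)=w$. It also divides $g$, hence it divides $\gcd(w,g)=1$. The divisibility $w\mid u_j$ is immediate from $w=\gcd(u_j,u_i)$. Given these, the cancellations of $g$ from $f$ and of $w$ from $h_{k+1}$ are exact at the level of exponents.

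Compared with simply citing \cite{P24}, your route gives a slightly stronger statement: the split generator itself is a strongly superficial element. It also shows, as you observed, that the hypothesis $w\neq 1$ is never used. If $w=1$, the same computation gives $m\in I^{k+1}\subseteq I^k$.

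One correction to your side remark: $w\neq 1$ is not what stops $u_i$ from dividing the other generators. That situation corresponds to $g=1$. Then $u_i=w$ divides every $u_j$, and minimality of $\mathcal{G}(I)$ forces $I=(u_i)$. When $w=1$, $u_i$ is instead coprime to every other minimal generator.
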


\begin{theorem}\label{separable}
Let \(G\) be a finite simple graph whose vertex set decomposes as
\[
V(G)=C \sqcup \{v\} \sqcup X,
\]
where \(C\) is a nonempty clique. Assume that every vertex of \(C\) is adjacent to \(v\) and to every vertex of \(X\), while the vertex \(v\) has no neighbors in \(X\). Then the closed neighborhood ideal \(CNI(G)\) of $G$ is separable, and hence satisfies the strong persistence property. Moreover, if \(X\neq \varnothing\), then the monomial corresponding to \(v\),
\[
u_v=\left(\prod_{c\in C} x_c\right)x_v,
\]
is a split generator of \(CNI(G)\).
\end{theorem}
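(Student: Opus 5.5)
The plan is to write down $\mathcal{G}(CNI(G))$ explicitly from the closed neighborhoods, and then check the four conditions in the definition of a separable ideal directly, with split generator $u_v$, $w=\prod_{c\in C}x_c$ and $g=x_v$. Strong persistence then follows from \cite[Theorem 2.11]{P24}. Throughout, write $x_C=\prod_{c\in C}x_c$.

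First, compute the closed neighborhoods using the hypotheses.
\begin{itemize}
\item For $v$: its only neighbors are the vertices of $C$, so $N[v]=C\cup\{v\}$ and $u_v=x_Cx_v$.
\item For $c\in C$: $c$ is adjacent to the rest of the clique $C$, to $v$, and to all of $X$, so $N[c]=V(G)$ and $u_c=\prod_{y\in V(G)}x_y$. This is divisible by $u_v$, so no $u_c$ is a minimal generator.
\item For $x\in X$: $N[x]=\{x\}\cup C\cup N_X(x)$, so $u_x=x_C\,x_x\prod_{y\in N_X(x)}x_y$. In particular $x_v\nmid u_x$, because $v$ has no neighbors in $X$.
\end{itemize}
Hence $CNI(G)=(u_v)+(u_x : x\in X)$. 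The minimal generating set $\mathcal{G}(CNI(G))$ is $u_v$ together with the minimal elements among the $u_x$.

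Next, check that $u_v$ really is minimal: no $u_x$ divides $u_v$, since $x_x\mid u_x$ but $x_x\nmid u_v$. Conversely $u_v\nmid u_x$, since $x_v\nmid u_x$. So $u_v\in\mathcal{G}(CNI(G))$, and every other minimal generator has the form $u_x$ for some $x\in X$.

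Now verify the separability conditions for $u_i=u_v$.
\begin{itemize}
\item $w=x_C\neq 1$, because $C\neq\varnothing$.
\item $\gcd(w,g)=\gcd(x_C,x_v)=1$, because $v\notin C$.
\item $u_v=w\cdot g$ by the computation above.
\item For every other minimal generator $u_x$ we have $\gcd(u_x,u_v)=x_C=w$. Indeed $x_C$ divides both monomials, and the only remaining variable of $u_v$ is $x_v$, which does not divide $u_x$.
\end{itemize}
This proves that $u_v$ is a split generator whenever $X\neq\varnothing$.

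If $X=\varnothing$, then $\mathcal{G}(CNI(G))=\{u_v\}$. The condition on the other generators is vacuous, so $CNI(G)$ is still separable. It is also principal, so strong persistence holds trivially in that case.

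The only delicate point is the second step: correctly identifying $\mathcal{G}(CNI(G))$, in particular discarding the $u_c$ and confirming that $u_v$ survives. The separability check itself is then immediate.
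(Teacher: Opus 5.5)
Your proposal is correct and follows the paper's proof essentially step for step. Like the paper, you compute $N[v]=C\cup\{v\}$, $N[c]=V(G)$, and $N[x]\supseteq C$ with $v\notin N[x]$, discard the $u_c$, verify the separability conditions with $w=\prod_{c\in C}x_c$ and $g=x_v$, and invoke \cite[Theorem 2.11]{P24}. Your explicit check that $u_v$ is itself minimal, and your phrase ``minimal elements among the $u_x$'', are slightly more precise than the paper's ``after removing any repetitions'', since one $u_x$ can properly divide another.
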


\begin{proof}
For each vertex \(y\in V(G)\), let
\[
u_y=\prod_{z\in N[y]} x_z
\]
denote the monomial associated with the closed neighborhood of \(y\). Set
\[
w=\prod_{c\in C} x_c.
\]
Since every vertex of \(C\) is adjacent to \(v\) and \(v\) is not adjacent to any vertex of \(X\), we have
\[
N[v]=C\cup\{v\},
\qquad\text{and hence}\qquad
u_v=wx_v.
\]

Now let \(x\in X\). By hypothesis, every vertex of \(C\) is adjacent to \(x\), while \(v\notin N[x]\). Therefore
\[
N[x]=C\cup N_X[x],
\]
where \(N_X[x]\) denotes the closed neighborhood of \(x\) in the induced subgraph \(G[X]\). It follows that
\[
u_x
=
\left(\prod_{c\in C} x_c\right)
\left(\prod_{t\in N_X[x]} x_t\right)
=
w\cdot \prod_{t\in N_X[x]} x_t.
\]
In particular, \(w\mid u_x\) and \(x_v\nmid u_x\), so
\[
\gcd(u_v,u_x)=w
\qquad\text{for every } x\in X.
\]

Next, let \(c\in C\). Since \(C\) is a clique and each vertex of \(C\) is adjacent to every vertex of \(X\) and to \(v\), we obtain
\[
N[c]=V(G),
\]
and thus
\[
u_c=
\left(\prod_{c'\in C} x_{c'}\right)x_v\left(\prod_{x\in X} x_x\right).
\]
Hence \(u_v\mid u_c\), so no generator arising from a vertex of \(C\) is minimal. Consequently, the minimal monomial generators of \(CNI(G)\) consist of \(u_v\) together with the monomials \(u_x\) for \(x\in X\), after removing any repetitions.

Therefore \(u_v=wx_v\) with \(\gcd(w,x_v)=1\), and for every other minimal generator \(u_x\) we have \(\gcd(u_v,u_x)=w\). This is exactly the separability condition. Hence \(CNI(G)\) is separable, and \(u_v\) is a split generator.

If \(X=\varnothing\), then \(CNI(G)=(u_v)\) is principal, and the separability condition is vacuous.
\end{proof}
We conclude this section by presenting an example that highlights the construction in Theorem~\ref{separable}.
\begin{example}
Let $G$ be the graph obtained from the complete graph $K_4$ on the vertices 
$V(G)=\{v_1,v_2,v_3,v_4\}$ by deleting the edge $\{v_1,v_2\}$. Then
\[
N[v_1]=\{v_1,v_3,v_4\}, \quad
N[v_2]=\{v_2,v_3,v_4\}, \quad
N[v_3]=N[v_4]=V(G).
\]
Hence, the minimal generators of the closed neighborhood ideal are
\[
CNI(G)=\langle x_{v_1}x_{v_3}x_{v_4}, \; x_{v_2}x_{v_3}x_{v_4} \rangle.
\]

Set $w=x_{v_3}x_{v_4}$. Then
\[
u_{v_1}=w\,x_{v_1}, \qquad u_{v_2}=w\,x_{v_2},
\]
with $\gcd(w,x_{v_1})=\gcd(w,x_{v_2})=1$, and
\[
\gcd(u_{v_1},u_{v_2})=w.
\]
Thus $u_{v_1}$ (and similarly $u_{v_2}$) is a split generator, and hence $CNI(G)$ is separable.

Moreover, this example fits the framework of Theorem~\ref{separable} by taking 
\[
C=\{v_3,v_4\}, \quad v=v_1, \quad X=\{v_2\},
\]
where $C$ is a clique, $v$ is adjacent to every vertex of $C$, and $v$ is not adjacent to $X$. This illustrates the theorem.
\end{example}

\section{Edge Ideal and Closed Neighborhood Ideal of Helm Graph} \label{helm}

The \emph{edge ideal} of a graph \(G\), introduced by Villarreal \cite{SVV}, is the monomial ideal
\[
I(G)=\left(x_ix_j:\{x_i,x_j\}\in E(G)\right).
\]

\noindent Let \(n\ge 3\), and let \(H_{0,2n}\) denote the Helm graph with vertex set
\[
V(H_{0,2n})=\{x_0,x_1,\ldots,x_n,x_{n+1},\ldots,x_{2n}\},
\]
and edge set
\[
E(H_{0,2n})
=
E(C_n)
\cup
\{\{x_0,x_i\}:1\le i\le n\}
\cup
\{\{x_i,x_{n+i}\}:1\le i\le n\},
\]
where \(E(C_n)\) denotes the edge set of the cycle \(C_n\).

Let $\mathcal{A}=K[x_0,x_1,\ldots,x_{2n}]$ be the polynomial ring over a field \(K\). Then the edge ideal of \(H_{0,2n}\) is
\[
I(H_{0,2n})
=
\left(
x_ix_{i+1}:1\le i\le n-1,\;
x_nx_1,\;
x_0x_i:1\le i\le n,\;
x_ix_{n+i}:1\le i\le n
\right),
\]

and the closed neighborhood ideal of \(H_{0,2n}\) is given by
\[
CNI(H_{0,2n})
=
\left(
x_1x_{n+1},
x_2x_{n+2},
\ldots,
x_nx_{2n},
x_0x_1x_2\cdots x_n
\right).
\]

\noindent Throughout the section, we denote the closed neighborhood ideal of the Helm graph by $L=CNI(H_{0,2n})$.
The following graph is a Helm Graph.

\begin{figure}[H] \centering
\includegraphics[width=7cm, height=6cm]{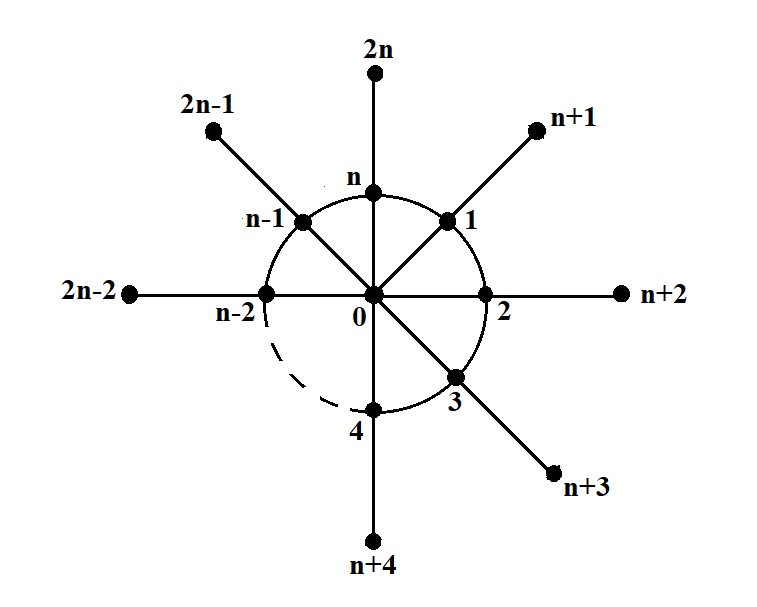}
\caption{Graph $H_{0,2n}$}\label{Helmfig} \end{figure}

\begin{theorem}\label{edge helm}
Let $\mathcal{A}=K[x_0,x_1,\ldots,x_{2n}]$ be a polynomial ring over a field $K$, and let $I(H_{0,2n})$ be the edge ideal of the Helm graph $H_{0,2n}$. Then, for every $n\geq 3$,
\[
\depth(\mathcal{A}/I(H_{0,2n}))=n.
\]
\end{theorem}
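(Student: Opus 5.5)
We prove the two inequalities $\depth\ge n$ and $\depth\le n$ separately, using the standard short exact sequences for colon and sum by a variable together with the depth lemma. Write $I=I(H_{0,2n})$. The hub variable $x_0$ is the natural one to split on:
\[
0\longrightarrow \mathcal{A}/(I:x_0)\xrightarrow{\;\cdot x_0\;}\mathcal{A}/I\longrightarrow \mathcal{A}/(I,x_0)\longrightarrow 0 .
\]

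Since $N(x_0)=\{x_1,\dots,x_n\}$, we have
\[
(I:x_0)=(x_1,\dots,x_n).
\]
Every other generator of $I$ contains some $x_i$ with $1\le i\le n$, so it is absorbed by this ideal. Hence $\mathcal{A}/(I:x_0)\cong K[x_0,x_{n+1},\dots,x_{2n}]$, which has depth $n+1$.

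On the other side, $(I,x_0)=(x_0)+I(S_n)$, where $S_n$ is the sunlet graph on $2n$ vertices, that is, the cycle $C_n$ with a pendant at each vertex. Therefore
\[
\depth \mathcal{A}/(I,x_0)=\depth\bigl(K[x_1,\dots,x_{2n}]/I(S_n)\bigr).
\]
We take this value from \cite{BGIRA}. If it is not available there, we compute it directly. The sunlet graph is a whiskered cycle, so it is Cohen--Macaulay of dimension $n$, and its depth is $n$.

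The depth lemma then gives
\[
\depth \mathcal{A}/I\ \ge\ \min\{n+1,\;n\}=n .
\]

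For the upper bound, the cleanest route is to exhibit an associated prime of height $n+1$, since $\depth \mathcal{A}/I\le \dim \mathcal{A}/P$ for every $P\in\operatorname{Ass}(\mathcal{A}/I)$. The associated primes of an edge ideal are the minimal vertex covers. A suitable cover is $P=(x_0,x_{n+1},\dots,x_{2n})$ together with one more vertex of the cycle.

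A cleaner choice is to take all hub and rim vertices $\{x_0,x_1,\dots,x_n\}$. This is a vertex cover, and it is minimal: $x_0$ is needed for the spokes, and each $x_i$ is needed for its pendant edge $x_ix_{n+i}$. Its height is $n+1$, so
\[
\dim \mathcal{A}/P=2n+1-(n+1)=n ,
\]
and therefore $\depth\mathcal{A}/I\le n$. Combining the two bounds yields $\depth\mathcal{A}/I=n$.

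The main obstacle is the lower bound for the sunlet quotient, namely certifying $\depth K[x_1,\dots,x_{2n}]/I(S_n)=n$. If \cite{BGIRA} gives exactly this, the step is immediate. Otherwise, I would argue via whiskers: adding a whisker at every vertex yields a Cohen--Macaulay graph (Villarreal). Alternatively, I would check directly that the linear forms $x_i-x_{n+i}$, for $1\le i\le n$, form a regular sequence.
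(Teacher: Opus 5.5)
Your lower bound is the paper's argument: the same exact sequence on $x_0$, the same computation $(I:x_0)=(x_1,\dots,x_n)$, and the same reduction of $(I,x_0)$ to the sunlet graph, whose depth $n$ the paper takes from \cite{BGIRA}. Your whiskered-cycle argument is a valid self-contained substitute: whiskering every vertex gives a Cohen--Macaulay graph, here of dimension $2n-n=n$.

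The gap is in the upper bound, because neither of your witnesses is an associated prime. The set $\{x_0,x_1,\dots,x_n\}$ is a vertex cover, but it is not minimal. Each spoke $x_0x_i$ is already covered by $x_i$, so $x_0$ is redundant. Indeed $(x_0,\dots,x_n)$ strictly contains the minimal prime $(x_1,\dots,x_n)$. Since $I$ is squarefree, $\operatorname{Ass}(\mathcal{A}/I)=\Min(\mathcal{A}/I)$, so the bound $\depth\mathcal{A}/I\le\dim\mathcal{A}/P$ is not available for this $P$. Your other candidate, $(x_0,x_{n+1},\dots,x_{2n})$ plus one rim vertex $x_j$, is not even a vertex cover when $n\ge 3$: the rim edge $x_{j+1}x_{j+2}$ (indices mod $n$) misses it. As written, $\depth\mathcal{A}/I\le n$ is therefore not proved.

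The repair is to drop one rim vertex and take its whisker instead: $P=(x_0,x_2,x_3,\dots,x_n,x_{n+1})$. Every rim edge has an endpoint in $\{x_2,\dots,x_n\}$, so $P$ is a cover. It is minimal, because $x_0$ alone covers $x_0x_1$, $x_{n+1}$ alone covers $x_1x_{n+1}$, and each $x_j$ with $j\ge 2$ alone covers $x_jx_{n+j}$. Thus $P\in\operatorname{Ass}(\mathcal{A}/I)$ has height $n+1$, and $\depth\mathcal{A}/I\le\dim\mathcal{A}/P=n$.

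This separate upper bound is genuinely needed, and here your structure is better than the paper's. The paper reads off $\depth\mathcal{A}/I=n$ directly from the depth lemma because $n+1>n$. But when the submodule $\mathcal{A}/(I:x_0)$ has depth exactly one more than the quotient $\mathcal{A}/(I,x_0)$, the depth lemma only gives $\depth\mathcal{A}/I\ge n$. For example, take $I=(y)\subset K[x,y]$ and split on $x$: the outer terms have depths $1$ and $0$, yet $\depth K[x,y]/I=1$. With the corrected witness $P$, or equivalently the non-unmixedness argument the paper itself uses for $CNI(H_{0,2n})$, your proof is complete.
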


\begin{proof}
Consider the following short exact sequence:
\[
0
\longrightarrow
\mathcal{A}/\big(I(H_{0,2n}):x_0\big)
\xrightarrow{\cdot x_0}
\mathcal{A}/I(H_{0,2n})
\longrightarrow
\mathcal{A}/\big(I(H_{0,2n}),x_0\big)
\longrightarrow
0.
\]

Recall that
\begin{align*}
I(H_{0,2n})=
(&x_0x_1,\;x_0x_2,\;x_0x_3,\;\ldots,\;x_0x_{n-2},\;x_0x_{n-1},\;x_0x_n,\\
&x_1x_2,\;x_2x_3,\;x_3x_4,\;\ldots,\;x_{n-2}x_{n-1},\;x_{n-1}x_n,\;x_nx_1,\\
&x_1x_{n+1},\;x_2x_{n+2},\;x_3x_{n+3},\;\ldots,\;x_{n-2}x_{2n-2},\;x_{n-1}x_{2n-1},\;x_nx_{2n}).
\end{align*}

We have
\begin{align*}
\big(I(H_{0,2n}):x_0\big)
=
(&x_1,\;x_2,\;x_3,\;\ldots,\;x_{n-2},\;x_{n-1},\;x_n,\\
&x_1x_2,\;x_2x_3,\;x_3x_4,\;\ldots,\;x_{n-2}x_{n-1},\;x_{n-1}x_n,\;x_nx_1,\\
&x_1x_{n+1},\;x_2x_{n+2},\;x_3x_{n+3},\;\ldots,\;x_{n-2}x_{2n-2},\;x_{n-1}x_{2n-1},\;x_nx_{2n})\\
=
(&x_1,\;x_2,\;\ldots,\;x_n),
\end{align*}
since every remaining generator is divisible by one of the variables
$x_1,\ldots,x_n$.

Hence,
\[
\mathcal{A}/\big(I(H_{0,2n}):x_0\big)
\cong
K[x_0,x_{n+1},x_{n+2},\ldots,x_{2n}],
\]
which is a polynomial ring in $n+1$ variables. Therefore,
\[
\depth\!\left(\mathcal{A}/\big(I(H_{0,2n}):x_0\big)\right)=n+1.
\]

Next, we compute the depth of
$\mathcal{A}/(I(H_{0,2n}),x_0)$.
Observe that
\begin{align*}
\big(I(H_{0,2n}),x_0\big)
=
(&x_0,\;
x_1x_2,\;x_2x_3,\;\ldots,\;x_{n-2}x_{n-1},\;x_{n-1}x_n,\;x_nx_1,\\
&x_1x_{n+1},\;x_2x_{n+2},\;\ldots,\;x_{n-1}x_{2n-1},\;x_nx_{2n}).
\end{align*}

Hence,
\begin{align*}
\depth\!\left(\mathcal{A}/(I(H_{0,2n}),x_0)\right)
=&\;
\depth\Bigl(
K[x_0,x_1,x_2,\ldots,x_{2n}]
/
(x_0,\,
x_1x_2,\,
x_2x_3,\,
\ldots,
x_{n-1}x_n,\,
x_nx_1,\\
&\hspace{3.5cm}
x_1x_{n+1},\,
x_2x_{n+2},\,
\ldots,\,
x_{n-1}x_{2n-1},\,
x_nx_{2n})
\Bigr)\\
=&\;
\depth(\mathcal{A}'/I(S'_{2n}))\\
=&\;
n,
\end{align*}
where $I(S'_{2n})$ is the edge ideal of the sunlet graph over the polynomial ring 
\(
\mathcal{A}'
=
K[x_1,x_2,\ldots,x_n, \\ x_{n+1},\ldots,x_{2n}].
\)
The last equality follows from \cite{BGIRA}[Theorem 2.1].

Finally, since
\[
\depth\!\left(\mathcal{A}/(I(H_{0,2n}):x_0)\right)=n+1
>
n
=
\depth\!\left(\mathcal{A}/(I(H_{0,2n}),x_0)\right),
\]
an application of \cite[Lemma 3.1.4]{WVb} yields
\[
\depth(\mathcal{A}/I(H_{0,2n}))=n.
\]
This completes the proof.
\end{proof}

\begin{lemma}\label{minhelm}
Let
\(
L=CNI(H_{0,2n})
=
(x_1x_{n+1},x_2x_{n+2},\ldots,x_nx_{2n},x_0x_1x_2\cdots x_n)
\)
be the closed neighborhood ideal of the Helm graph \(H_{0,2n}\). Then the set of minimal primes is given by
\[
\begin{aligned}
\Min(\mathcal A/L)
=
&\left\{
(y_1,\ldots,y_n):
y_i\in\{x_i,x_{n+i}\},
\;
(y_1,\ldots,y_n)\neq(x_{n+1},\ldots,x_{2n})
\right\}
\\
&\cup
\left\{
(x_0,x_{n+1},\ldots,x_{2n})
\right\}.
\end{aligned}
\]
Consequently,
\(
|\Min(\mathcal A/L)|=2^n.
\)
Moreover, \(2^n-1\) of the minimal primes have height \(n\), while the minimal prime
\(
(x_0,x_{n+1},\ldots,x_{2n})
\)
has height \(n+1\).
\end{lemma}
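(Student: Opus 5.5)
Since $L$ is squarefree, its minimal primes are exactly the ideals $(x_j : j\in S)$ with $S$ a minimal vertex cover of the hypergraph whose edges are the supports of the generators, namely $\{1,n+1\},\dots,\{n,2n\}$ and $\{0,1,\dots,n\}$. I will describe the minimal covers $S\subseteq\{0,1,\dots,2n\}$ directly.

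First I classify the covers. Every cover $S$ contains, for each $i$, at least one of $i$ and $n+i$. If $S$ contains some $i\in\{1,\dots,n\}$, then the big generator $x_0x_1\cdots x_n$ is covered as well. If $S$ contains no $i\in\{1,\dots,n\}$, then it must contain $n+1,\dots,2n$, and to cover the big generator it must also contain $0$.

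Next I check minimality and that the list is exhaustive.
\begin{enumerate}[label=(\alph*)]
\item Take a transversal $S=\{y_1,\dots,y_n\}$ with $y_i\in\{i,n+i\}$ and not all $y_i=n+i$. It is a cover by the first step. It is minimal because removing any $y_i$ leaves the edge $\{i,n+i\}$ uncovered.
\item Take $S=\{0,n+1,\dots,2n\}$. Removing $0$ uncovers the big edge, and removing $n+i$ uncovers $\{i,n+i\}$, so $S$ is minimal.
\item Let $S$ be an arbitrary minimal cover. If $S\cap\{1,\dots,n\}\neq\emptyset$, choose $y_i\in S\cap\{i,n+i\}$ for each $i$, preferring $i$ whenever $i\in S$. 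The resulting transversal $T$ is a cover of type (a), so $T=S$ by minimality. Note that this also forces $0\notin S$. If $S\cap\{1,\dots,n\}=\emptyset$, then $S\supseteq\{0,n+1,\dots,2n\}$, and minimality gives equality.
\end{enumerate}

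Counting, there are $2^n-1$ transversals of type (a), each giving a prime of height $n$, and one further prime of height $n+1$. These primes are distinct, so $|\Min(\mathcal A/L)|=2^n$.

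The main obstacle is step (c): one must rule out non-transversal minimal covers, for instance covers containing both $i$ and $n+i$, or containing $0$ together with some $i\le n$. The selection argument handles this because a cover of type (a) sits inside any cover meeting $\{1,\dots,n\}$. Everything else is routine.
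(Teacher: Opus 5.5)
Your proof is correct and is essentially the paper's approach. The paper identifies the minimal primes of $L$ with the minimal dominating sets of $H_{0,2n}$ via the Honeycutt--Sather-Wagstaff correspondence, which amounts to your description as minimal transversals of the generator supports. Your step (c) supplies the exhaustiveness argument that the paper only asserts with ``observe''. It also handles the exceptional cover $\{0,n+1,\dots,2n\}$ more precisely than the paper's wording, since that cover is not a one-per-pair selection.
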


\begin{proof}
By \cite[Proposition~2.6]{honey}, the minimal primes of the closed neighborhood ideal \(L\) are in one-to-one correspondence with the minimal dominating sets of the Helm graph \(H_{0,2n}\).

\noindent Observe that every minimal dominating set of \(H_{0,2n}\) is obtained by selecting exactly one vertex from each pair
\(
\{x_i,x_{n+i}\},\,\, 1\le i\le n,
\)
except for the selection
\(
\{x_{n+1},x_{n+2},\ldots,x_{2n}\},
\)
which fails to dominate the central vertex \(x_0\). Therefore, the corresponding minimal primes are precisely
\[
(y_1,\ldots,y_n),
\qquad
y_i\in\{x_i,x_{n+i}\},
\]
excluding the prime
\(
(x_{n+1},\ldots,x_{2n}).
\)
The excluded selection gives rise to the prime
\[
(x_0,x_{n+1},\ldots,x_{2n}),
\]
which is also a minimal prime of \(L\). Consequently,
\[
\begin{aligned}
\Min(\mathcal A/L)
=
&\left\{
(y_1,\ldots,y_n):
y_i\in\{x_i,x_{n+i}\},
\;
(y_1,\ldots,y_n)\neq(x_{n+1},\ldots,x_{2n})
\right\}
\\
&
\cup
\left\{
(x_0,x_{n+1},\ldots,x_{2n})
\right\}.
\end{aligned}
\]

\noindent There are \(2^n\) ways to choose exactly one vertex from each pair
\(\{x_i,x_{n+i}\}\). Excluding one choice and replacing it by
\((x_0,x_{n+1},\ldots,x_{2n})\) yields
\[
|\Min(\mathcal A/L)|
=
(2^n-1)+1
=
2^n.
\]

\noindent Finally, each prime of the form
\((y_1,\ldots,y_n)\) is generated by \(n\) variables and hence has height \(n\), whereas the prime
\((x_0,x_{n+1},\ldots,x_{2n})\) is generated by \(n+1\) variables and therefore has height \(n+1\). This completes the proof.
\end{proof}

\begin{example}
Consider the Helm graph $H_{0,6}$ and its closed neighborhood ideal
\[
L=CNI(H_{0,6})
=
(x_0x_1x_2x_3,\;
x_1x_4,\;
x_2x_5,\;
x_3x_6).
\]
The minimal prime ideals of $L$ are
\[
\begin{aligned}
&(x_1,x_2,x_3),\;
(x_1,x_2,x_6),\;
(x_1,x_3,x_5),\;
(x_1,x_5,x_6),\\
&(x_2,x_3,x_4),\;
(x_2,x_4,x_6),\;
(x_3,x_4,x_5),\;
(x_0,x_4,x_5,x_6).
\end{aligned}
\]

\noindent Observe that each minimal prime, except $(x_0,x_4,x_5,x_6)$, is obtained by choosing exactly one vertex from each pair
\[
\{x_1,x_4\},\qquad
\{x_2,x_5\},\qquad
\{x_3,x_6\},
\]
provided that not all pendant vertices are selected simultaneously. Indeed, the choice $\{x_4,x_5,x_6\}$
does not dominate the central vertex $x_0$, and therefore the corresponding minimal prime is $(x_0,x_4,x_5,x_6).$ \\
This example illustrates the correspondence between the minimal primes of $L$ and the minimal dominating sets of the Helm graph $H_{0,2n}$, which is established in Lemma~\ref{minhelm}.
\end{example}

\begin{lemma}\label{dimhelm}
Let
\(
\mathcal{A}=K[x_0,x_1,\ldots,x_n,x_{n+1},\ldots,x_{2n}]
\)
be a polynomial ring over a field \(K\), and let
\(
L=CNI(H_{0,2n})
\)
be the closed neighborhood ideal of the Helm graph \(H_{0,2n}\). Then $\operatorname{ht}(L)=n$
and $\dim(\mathcal{A}/L)=n+1.$
\end{lemma}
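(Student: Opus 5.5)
The plan is to read both invariants off Lemma~\ref{minhelm}, which lists all minimal primes of $L$ together with their heights.

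First, recall that for a monomial (indeed any) ideal $L\subseteq\mathcal{A}$,
\[
\operatorname{ht}(L)=\min\{\operatorname{ht}(P):P\in\Min(\mathcal{A}/L)\},
\qquad
\dim(\mathcal{A}/L)=\max\{\dim(\mathcal{A}/P):P\in\Min(\mathcal{A}/L)\}.
\]
For a prime $P$ generated by $r$ variables in the polynomial ring $\mathcal{A}$ with $2n+1$ variables, we have $\operatorname{ht}(P)=r$ and $\dim(\mathcal{A}/P)=2n+1-r$.

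By Lemma~\ref{minhelm}, $2^n-1$ of the minimal primes are of the form $(y_1,\ldots,y_n)$ with $y_i\in\{x_i,x_{n+i}\}$, each generated by $n$ variables. The remaining minimal prime $(x_0,x_{n+1},\ldots,x_{2n})$ has height $n+1$. Since $n\ge 3$, the set $\{(y_1,\dots,y_n)\}\setminus\{(x_{n+1},\dots,x_{2n})\}$ is nonempty; for instance $(x_1,\ldots,x_n)$ belongs to it. Hence the minimum height is $n$, giving $\operatorname{ht}(L)=n$.

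For the dimension, any height-$n$ minimal prime $P$ gives $\dim(\mathcal{A}/P)=(2n+1)-n=n+1$, while the exceptional prime gives $n$. Therefore $\dim(\mathcal{A}/L)=n+1$. Equivalently, $\mathcal{A}$ is a polynomial ring, hence catenary and equidimensional, so $\dim(\mathcal{A}/L)=\dim\mathcal{A}-\operatorname{ht}(L)$.

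As a sanity check independent of Lemma~\ref{minhelm}, one could verify directly that $x_0,x_{n+1},\ldots,x_{2n}$ is a maximal independent set modulo $L$. No monomial in these variables lies in $L$, because every generator of $L$ involves some $x_i$ with $1\le i\le n$. This gives $\dim(\mathcal{A}/L)\ge n+1$, and the upper bound comes from every minimal prime containing at least $n$ variables.

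The only step needing care is that Lemma~\ref{minhelm} gives the complete list of minimal primes. That is already established, so there is no real obstacle; the argument is a direct consequence of Lemma~\ref{minhelm}.
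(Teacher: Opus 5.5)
Your proposal is correct and follows the paper's argument: you read $\operatorname{ht}(L)=n$ off Lemma~\ref{minhelm} as the minimum height of a minimal prime, then get $\dim(\mathcal{A}/L)=(2n+1)-n=n+1$ from the dimension formula in the polynomial ring. Two of your additions are correct and fill in what the paper leaves implicit: the justification of that formula by taking the maximum of $\dim(\mathcal{A}/P)$ over minimal primes, and the independent-set check.
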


\begin{proof}
By Lemma~\ref{minhelm}, the minimal primes of \(L\) consist of \(2^n-1\) primes of height \(n\) together with one minimal prime of height \(n+1\). Therefore,
\[
\operatorname{ht}(L)
=
\min\{\operatorname{ht}(P):P\in\Min(\mathcal{A}/L)\}
=
n.
\]

\noindent Since \(\mathcal{A}\) is a polynomial ring in \(2n+1\) variables, we have $\dim(\mathcal{A})=2n+1.$ Hence, by the dimension formula,
\[
\dim(\mathcal{A}/L)
=
\dim(\mathcal{A})-\operatorname{ht}(L)
=
(2n+1)-n
=
n+1.
\]
\end{proof}

Now we will compute the depth of closed neighborhood ideals of helm graph by using the dimension of $\mathcal{A}/L.$ 
\begin{theorem}\label{closed helm}
Let
\(
\mathcal{A}=K[x_0,x_1,\ldots,x_{2n}]
\)
be a polynomial ring over a field \(K\), and let
\(
L=CNI(H_{0,2n}).
\)
Then
\(
\depth(\mathcal{A}/L)=n.
\)
\end{theorem}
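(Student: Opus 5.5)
The plan is to use the same colon/sum short exact sequence technique as in Theorem~\ref{edge helm}, splitting on the variable \(x_0\):
\[
0\longrightarrow \mathcal{A}/(L:x_0)\xrightarrow{\cdot x_0}\mathcal{A}/L\longrightarrow \mathcal{A}/(L,x_0)\longrightarrow 0 .
\]
Both outer ideals are easy to describe. Since \(x_0\) only appears in the generator \(x_0x_1\cdots x_n\), we get
\[
(L:x_0)=(x_1x_{n+1},\ldots,x_nx_{2n},\,x_1x_2\cdots x_n),
\qquad
(L,x_0)=(x_0,\,x_1x_{n+1},\ldots,x_nx_{2n}).
\]
The second ring \(\mathcal{A}/(L,x_0)\) is \(K[x_1,\ldots,x_{2n}]/(x_1x_{n+1},\ldots,x_nx_{2n})\). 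This is a tensor product of \(n\) one-dimensional hypersurface rings \(K[x_i,x_{n+i}]/(x_ix_{n+i})\), so it is Cohen--Macaulay of depth \(n\).

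For \((L:x_0)\) the variable \(x_0\) is free, so its depth is \(1+\depth(R/J)\), where
\[
R=K[x_1,\ldots,x_{2n}],\qquad J=(x_1x_{n+1},\ldots,x_nx_{2n},\,x_1\cdots x_n).
\]
I expect \(\depth(R/J)\ge n-1\), which gives \(\depth\mathcal{A}/(L:x_0)\ge n\). To prove this I would split on \(x_n\):
\begin{itemize}
\item \((J:x_n)=(x_{2n},\,x_1x_{n+1},\ldots,x_{n-1}x_{2n-1},\,x_1\cdots x_{n-1})\). This is the same type of ideal with \(n-1\) pairs, plus a killed variable \(x_{2n}\) and a free variable \(x_n\), so induction applies.
\item \((J,x_n)=(x_n,\,x_1x_{n+1},\ldots,x_{n-1}x_{2n-1})\). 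Here \(x_{2n}\) is free, so the depth is \(n-1+1=n\).
\end{itemize}
With \(n=1\) or \(2\) as the base case, this yields \(\depth(R/J)\ge n-1\) by the depth lemma. Alternatively, \(J\) is the sum of the complete intersection \((x_ix_{n+i})\) and a single monomial, and the same bound follows.

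The upper bound should not come from the exact sequence. The depth lemma gives \(\depth\mathcal{A}/L\ge\min\{\depth\mathcal{A}/(L:x_0),\,\depth\mathcal{A}/(L,x_0)\}=n\). For \(\le n\), I would use that \(\depth\mathcal{A}/L\le\dim\mathcal{A}/P\) for every associated prime \(P\). Taking a height-\(n+1\) minimal prime from Lemma~\ref{minhelm}, namely \(P=(x_0,x_{n+1},\ldots,x_{2n})\), gives \(\dim\mathcal{A}/P=(2n+1)-(n+1)=n\). Hence \(\depth(\mathcal{A}/L)\le n\). This also explains the remark before the statement: by Lemma~\ref{dimhelm} the ring is not Cohen--Macaulay, since \(\dim=n+1\), but depth drops by exactly one.

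The main obstacle is the lower bound \(\depth\mathcal{A}/(L:x_0)\ge n\). Exact equality is not needed; only the inequality matters. If the induction on \(x_n\) gets messy, I would instead use the alternative sequence with the multiplier \(x_1\cdots x_n\):
\[
0\to R/(I:u)\xrightarrow{\cdot u} R/I\to R/J\to 0,\qquad I=(x_ix_{n+i})_{i=1}^n,\quad u=x_1\cdots x_n .
\]
Here \((I:u)=(x_{n+1},\ldots,x_{2n})\), so \(R/(I:u)\) has depth \(n\), and \(R/I\) has depth \(n\). The depth lemma then gives \(\depth R/J\ge n-1\) directly, and that is all the argument requires.
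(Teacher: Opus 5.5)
Your proof is correct, but it reaches the lower bound by a longer route than the paper. The paper uses a single exact sequence
\[
0\to\mathcal{A}/(J:u)\xrightarrow{\cdot u}\mathcal{A}/J\to\mathcal{A}/L\to 0,
\qquad J=(x_1x_{n+1},\ldots,x_nx_{2n}),\quad u=x_0x_1\cdots x_n,
\]
directly in $\mathcal{A}$. There $\mathcal{A}/J$ is a complete intersection of depth $n+1$, and $(J:u)=(x_{n+1},\ldots,x_{2n})$ also gives depth $n+1$. So the depth lemma yields $\depth(\mathcal{A}/L)\ge n$ in one step.

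Your fallback sequence in $R$ with multiplier $x_1\cdots x_n$ is exactly this sequence with the free variable $x_0$ removed. The preliminary splitting on $x_0$, and the alternative induction on $x_n$, are therefore valid but unnecessary. They mirror the proof of Theorem~\ref{edge helm}, at the cost of a second exact sequence.

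For the upper bound, the paper argues that $L$ is not unmixed. Hence $\mathcal{A}/L$ is not Cohen--Macaulay, so $\depth(\mathcal{A}/L)<\dim(\mathcal{A}/L)=n+1$ by Lemma~\ref{dimhelm}. You instead apply $\depth(\mathcal{A}/L)\le\dim\mathcal{A}/P$ to the associated prime $P=(x_0,x_{n+1},\ldots,x_{2n})$. This is the same underlying fact, since it is how one proves that Cohen--Macaulay rings are unmixed. Your version is more direct, because it gives $\depth(\mathcal{A}/L)\le n$ without needing $\dim(\mathcal{A}/L)$ at all.
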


\begin{proof}
Recall that
\[
L=CNI(H_{0,2n})
=
(x_1x_{n+1},x_2x_{n+2},\ldots,x_nx_{2n},x_0x_1x_2\cdots x_n).
\]
Let
\(
J=(x_1x_{n+1},x_2x_{n+2},\ldots,x_nx_{2n})
\)
and
\(
u=x_0x_1x_2\cdots x_n.
\)
Then
\(
L=J+(u).
\)

Consider the following short exact sequence:
\[
0
\longrightarrow
\mathcal{A}/(J:u)
\xrightarrow{\cdot u}
\mathcal{A}/J
\longrightarrow
\mathcal{A}/L
\longrightarrow
0.
\]

\noindent We first compute the depth of the quotient ring \(\mathcal{A}/J\). Observe that the generators
\[
x_1x_{n+1},\;
x_2x_{n+2},\;
\ldots,\;
x_nx_{2n}
\]
have pairwise disjoint supports. Consequently, they form a regular sequence in $\mathcal{A}$. It follows from \cite{BH93} that \(J\) is a complete intersection ideal of height \(n\). Hence, the quotient ring $(\mathcal{A}/J)$ is a Cohen-Macaulay ring. Therefore,
\[
\depth(\mathcal{A}/J)
=
\dim(\mathcal{A}/J)
=
(2n+1)-n
=
n+1.
\]

\noindent Next, we compute the colon ideal \(J:u\). Since
\(
u=x_0x_1x_2\cdots x_n,
\)
for each \(i=1,\ldots,n\),
\[
\frac{x_ix_{n+i}}{\gcd(x_ix_{n+i},u)}
=
\frac{x_ix_{n+i}}{x_i}
=
x_{n+i}.
\]
Hence,
\(
J:u=(x_{n+1},x_{n+2},\ldots,x_{2n}).
\)
Therefore,
\(
\mathcal{A}/(J:u)
\cong
K[x_0,x_1,\ldots,x_n],
\)
which is a polynomial ring in \(n+1\) variables. Thus,
\[
\depth(\mathcal{A}/(J:u))
=
n+1.
\]

\noindent Applying the Depth Lemma \cite[Lemma~3.1.4]{WVb} to the above short exact sequence, we obtain
\[
\depth(\mathcal{A}/L)\ge n.
\]

\noindent On the other hand, by Lemma~\ref{minhelm}, the ideal \(L\) has \(2^n-1\) minimal primes of height \(n\) and one minimal prime of height \(n+1\). Hence \(L\) is not unmixed. Since by \cite[Theorem~5.3.16]{FMRS18} every Cohen--Macaulay ring is unmixed, it follows that \(\mathcal{A}/L\) is not Cohen--Macaulay. Therefore,
\(
\depth(\mathcal{A}/L)
<
\dim(\mathcal{A}/L).
\)
By Lemma~\ref{dimhelm},
\(
\dim(\mathcal{A}/L)=n+1.
\)
Hence,
\[
\depth(\mathcal{A}/L)\le n.
\]

Combining the inequalities
\[
\depth(\mathcal{A}/L)\ge n
\quad\text{and}\quad
\depth(\mathcal{A}/L)\le n,
\]
we conclude that
\[
\depth(\mathcal{A}/L)=n.
\]
\end{proof}

For a monomial ideal $I\subseteq\mathcal{A}$, we denote by
$\operatorname{Min}(I)$ the set of minimal prime ideals of $I$. If $I$ is a square-free monomial ideal, then $\operatorname{Ass}(\mathcal{A}/I)=\operatorname{Min}(I)$, \cite[Corollary~1.3.6]{bookHerzog}). Therefore, for square-free monomial ideals, the $v$-number can be expressed as
\[
v(I)
=
\min\{\deg(f):f\in\mathcal{A},\ (I:f)\in
\operatorname{Min}(I)\}.
\]
\begin{lemma} \label{vnumb}
 Let $L=CNI(H_{0,2n})=(x_1x_{n+1},x_2x_{n+2},\ldots,x_nx_{2n},x_0x_1x_2\cdots x_n)
\subseteq
\mathcal{A}.$ If \(g\in\mathcal{A}\) is a monomial satisfying $\deg(g)<n,$ then $(L:g)\notin\operatorname{Ass}(\mathcal{A}/L).$\end{lemma}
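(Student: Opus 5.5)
Since $L$ is squarefree, $\operatorname{Ass}(\mathcal{A}/L)=\operatorname{Min}(L)$, and Lemma~\ref{minhelm} lists these primes explicitly. The plan is to show directly that whenever $(L:g)=P$ for some $P\in\operatorname{Min}(\mathcal{A}/L)$, the monomial $g$ must be divisible by at least $n$ distinct variables. That gives $\deg(g)\ge n$, which is the contrapositive of the statement.

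The basic tool is the standard description of colons of monomial ideals: $(L:g)$ is generated by the monomials $w/\gcd(w,g)$ for $w\in\mathcal{G}(L)$. If $(L:g)=P$ is a prime generated by variables, then every variable $z$ among the generators of $P$ must lie in $(L:g)$. Hence some $w/\gcd(w,g)$ divides $z$. It cannot equal $1$, since otherwise $g\in L$ and $(L:g)=\mathcal{A}$. So $w/\gcd(w,g)=z$ exactly. In other words, for each generating variable $z$ of $P$ there is a generator $w\in\mathcal{G}(L)$ with $z\mid w$ and every other variable of $w$ dividing $g$.

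\textbf{Case 1:} $P=(x_0,x_{n+1},\ldots,x_{2n})$. The variable $x_0$ occurs only in $u=x_0x_1\cdots x_n$. Therefore $x_0\in(L:g)$ forces $x_1x_2\cdots x_n\mid g$, so $\deg(g)\ge n$.

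\textbf{Case 2:} $P=(y_1,\ldots,y_n)$ with $y_i\in\{x_i,x_{n+i}\}$. For each $i$ the witness generator for $y_i$ is one of two kinds.
\begin{itemize}
\item It is $x_ix_{n+i}$. Then $g$ is divisible by the partner variable $y_i'$, where $\{y_i,y_i'\}=\{x_i,x_{n+i}\}$.
\item It is $u$. This requires $y_i=x_i$ and $x_0\prod_{j\neq i}x_j\mid g$, which already gives $n$ distinct variables dividing $g$.
\end{itemize}
If the second kind occurs for some $i$, we are done. Otherwise $g$ is divisible by all $n$ partners $y_1',\ldots,y_n'$. These lie in the pairwise disjoint sets $\{x_i,x_{n+i}\}$, so they are distinct, and again $\deg(g)\ge n$.

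Since Lemma~\ref{minhelm} says these two cases exhaust $\operatorname{Min}(\mathcal{A}/L)=\operatorname{Ass}(\mathcal{A}/L)$, any monomial $g$ with $\deg(g)<n$ satisfies $(L:g)\notin\operatorname{Ass}(\mathcal{A}/L)$.

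The main point needing care is the claim that a variable $z\in(L:g)$ must equal some $w/\gcd(w,g)$ exactly. This uses $g\notin L$ to rule out the quotient being $1$, which is automatic because $(L:g)$ is a proper prime. Everything else is bookkeeping over the explicit generators of $L$.
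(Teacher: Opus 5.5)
Your proof is correct and follows essentially the paper's route: both use the explicit list of minimal primes from Lemma~\ref{minhelm} and check, case by case, which variables can lie in $(L:g)$.

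The only difference is bookkeeping. The paper picks, by pigeonhole, an index $i$ with $x_i,x_{n+i}\nmid g$ and derives a contradiction from that single pair. In the case $(x_0,x_{n+1},\ldots,x_{2n})$ it uses some $x_{n+i}$ with $x_i\nmid g$. You instead extract a witness generator for every variable of the prime and count. This also makes explicit two points the paper leaves tacit: that $g\notin L$, and that $u$ cannot serve as the witness when $\deg(g)<n$.
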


\begin{proof}
Suppose that \(\deg(g)<n\). Then \(g\) cannot contain one variable from every pair $\{x_i,x_{n+i}\}; i=1,\ldots,n.$ Hence there exists an index \(i\) such that neither \(x_i\) nor \(x_{n+i}\) divides \(g\).

If \((L:g)\) were an associated prime of \(L\), then it must coincide with one of the minimal primes of \(L\) described in Lemma~\ref{minhelm}.

In the first case of Lemma~\ref{dimhelm}, every minimal associated prime contains exactly one variable from each pair \(\{x_i,x_{n+i}\}\). Therefore, for the above index \(i\), either \(x_i\in(L:g)\) or \(x_{n+i}\in(L:g)\). This would imply that either \(x_ig\in L\) or \(x_{n+i}g\in L\). Since \(g\) contains neither \(x_i\) nor \(x_{n+i}\), neither product is divisible by any minimal generator of \(L\), which is impossible.

In the second case, $(L:g)=(x_0,x_{n+1},\ldots,x_{2n}).$ Since \(\deg(g)<n\), the monomial \(g\) omits at least one variable among \(x_1,\ldots,x_n\). Consequently, for some \(i\), $x_{n+i}g\notin L,$ contradicting the assumption that \(x_{n+i}\in(L:g)\).

Therefore, $(L:g)\notin\operatorname{Ass}(\mathcal{A}/L)$ whenever \(\deg(g)<n\).
\end{proof}

\begin{theorem} \label{vnumber helm}
Let $L=CNI(H_{0,2n}) \subset \mathcal{A}=\mathbb{K}[x_0,x_1 \ldots,x_n,x_{n+1},\ldots,x_{2n}]$ be the closed neighborhood ideal of the Helm graph \(H_{0,2n}\).
Then $v(L)=n.$
\end{theorem}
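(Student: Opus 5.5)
The plan is to prove the two inequalities $v(L)\ge n$ and $v(L)\le n$ separately.

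\emph{Lower bound.} This is exactly Lemma~\ref{vnumb}. If $g$ is a monomial with $\deg(g)<n$, then $(L:g)\notin\operatorname{Ass}(\mathcal{A}/L)$. Since $L$ is squarefree, $\operatorname{Ass}(\mathcal{A}/L)=\operatorname{Min}(L)$, and the colon of a monomial ideal by a polynomial reduces to the monomial case in the definition of the $v$-number. Hence no $f$ of degree less than $n$ realizes an associated prime, and $v(L)\ge n$.

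\emph{Upper bound.} Here I will exhibit an explicit monomial of degree $n$ whose colon is one of the minimal primes from Lemma~\ref{minhelm}. The natural candidate is $g=x_1x_2\cdots x_n$, targeting the minimal prime $P=(x_0,x_{n+1},\ldots,x_{2n})$. Since $L$ is monomial, the colon is computed generator by generator:
\[
(L:g)=\left(\frac{x_ix_{n+i}}{\gcd(x_ix_{n+i},g)},\ \frac{u}{\gcd(u,g)}\right),
\]
where $u=x_0x_1\cdots x_n$. The first type of generator gives $x_{n+i}$ for $1\le i\le n$, and $u/\gcd(u,g)=x_0$. Therefore $(L:g)=(x_0,x_{n+1},\ldots,x_{2n})$, which lies in $\operatorname{Min}(L)\subseteq \operatorname{Ass}(\mathcal A/L)$ by Lemma~\ref{minhelm}. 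Thus $v(L)\le \deg g=n$.

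Alternatively, one could use $g=x_{n+1}x_2\cdots x_n$. Its colon contains $x_1$ and $x_{n+i}$ for $i\ge 2$, giving a height-$n$ minimal prime of the first family, but the $x_0$-generator needs a check, so the first choice is cleaner.

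\emph{Main obstacle.} I expect no real obstacle; the key is the correct colon computation in the upper bound. In particular, one must check that the generator $u$ contributes exactly $x_0$ and that no additional generators appear, which holds because colon by a monomial of a monomial ideal is generated by the quotients above. Combining the two bounds gives $v(L)=n$.
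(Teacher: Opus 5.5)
Your proof is correct and follows the paper's structure. The lower bound is Lemma~\ref{vnumb}, and the upper bound comes from an explicit degree-$n$ witness. The only difference is the witness: the paper uses $f=x_{n+1}x_{n+2}\cdots x_{2n}$, for which $(L:f)=(x_1,\ldots,x_n)$. Your $g=x_1x_2\cdots x_n$ gives the height-$(n+1)$ minimal prime $(x_0,x_{n+1},\ldots,x_{2n})$, and your colon computation for it is right.
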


\begin{proof}
By Lemma~\ref{minhelm}, the associated primes of $L$ consist of all minimal primes of the form 
\[
(x_0,x_{n+1}, x_{n+2}, \ldots, x_{2n}) \quad \text{and} \quad (y_1,\ldots,y_n);
y_i\in\{x_i,x_{n+i}\}, 
\]
excluding the choice $(x_{n+1},x_{n+2},\ldots,x_{2n}).$

\noindent Consider the monomial $f=x_{n+1}x_{n+2}\cdots x_{2n}.$ For each $i=1,\ldots,n$,
\[
x_if=(x_ix_{n+i})\prod_{j\neq i}x_{n+j}\in L,
\]
while
\[
x_{n+i}f\notin L
\quad\text{and}\quad
x_0f\notin L.
\]
Hence $(L:f)=(x_1,x_2,\ldots,x_n),$ which is a minimal associated prime of $L$. Therefore,
\[
v(L)\leq\deg(f)=n.
\]

\noindent On the other hand, by the lemma \ref{vnumb}, if $g$ is any monomial with $\deg(g)<n,$ then 
\[
(L:g)\notin\operatorname{Ass}(\mathcal{A}/L).
\]
Thus no monomial of degree strictly less than $n$ contributes to the definition of the $v$-number. Consequently,
\[
v(L)\geq n.
\]

\noindent Combining the inequalities, we conclude that $v(L)=n.$
\end{proof}






\begin{thebibliography}{99}







\bibitem{BGIRA}
H.~Bibi, H.~Garmini, Irawati, A.~Rauf, and Um-E-Aimen,
Some algebraic invariants of ideals associated with graph families.
To appear in \emph{Electron. J. Graph Theory Appl.}

\bibitem{BRU24}
H.~Bibi, A.~Rauf, and A.~Umar,
On powers of some classes of monomial ideals.
\emph{Bull. Korean Math. Soc.} \textbf{61} (2024), no.~6, 1579--1591.

\bibitem{biswas}
A.~Biswas and M.~Mandal,
A study of $v$-number for some monomial ideals.
2023, \arxiv{2308.08604}.


\bibitem{Bro79}
M.~Brodmann,
Asymptotic stability of $\operatorname{Ass}(M/I^nM)$.
\emph{Proc. Amer. Math. Soc.} \textbf{74} (1979), no.~1, 16--18.

\bibitem{2Bro79}
M.~Brodmann,
The asymptotic nature of the analytic spread.
\emph{Math. Proc. Cambridge Philos. Soc.} \textbf{86} (1979), no.~1, 35--39.

\bibitem{BH93}
W.~Bruns and J.~Herzog,
\emph{Cohen--Macaulay Rings}.
Cambridge Stud. Adv. Math. 39,
Cambridge University Press, Cambridge, 1993.

\bibitem{vnumber}
S.~M. Cooper, A.~Seceleanu, S.~O. Toh\u{a}neanu, M.~Vaz Pinto, and R.~H.~Villarreal,
Generalized minimum distance functions and algebraic invariants of Geramita ideals.
\emph{Adv. Appl. Math.} \textbf{112} (2020), 101940.

\bibitem{con+99}
A. Conca and E. De Negri, $M$-sequences, graph ideals, and ladder ideals of linear type, \textit{J. Algebra}, \textbf{211} (1999), 599--624.

\bibitem{Mac}
D.~R. Grayson and M.~E. Stillman,
\emph{Macaulay2, a software system for research in algebraic geometry}.
2002,
\url{http://www.math.uiuc.edu/Macaulay2/}.

\bibitem{FHT11}
C.~A.~Francisco, H.~T.~H\`a, and A.~Van Tuyl,
Coloring of hypergraphs, perfect graphs and associated primes of powers of monomial ideals.
\emph{J. Algebra} \textbf{331} (2011), 224--242.

\bibitem{S}
G.~M. Greuel, G.~Pfister, and H.~Sch\"onemann,
\emph{Singular 2.0: A Computer Algebra System for Polynomial Computations}.
Centre for Computer Algebra, University of Kaiserslautern, 2001,
\url{http://www.singular.uni-kl.de}.

\bibitem{bookHerzog}
J.~Herzog and T.~Hibi,
\emph{Monomial Ideals}.
Grad. Texts in Math. 260,
Springer, London, 2011.

\bibitem{HQ15}
J.~Herzog and A.~A.~Qureshi,
Persistence and stability properties of powers of ideals.
\emph{J. Pure Appl. Algebra} \textbf{219} (2015), no.~3, 530--542.

\bibitem{HRV13}
J.~Herzog, A.~Rauf, and M.~Vladoiu,
The stable set of associated prime ideals of a polymatroidal ideal.
\emph{J. Algebr. Comb.} \textbf{37} (2013), no.~2, 289--312.

\bibitem{honey}
J.~Honeycutt and K.~Sather-Wagstaff,
Closed neighborhood ideals of finite simple graphs.
\emph{La Matematica} \textbf{1} (2022), 387--394.

\bibitem{JV}
D.~Jaramillo and R.~H.~Villarreal,
The $v$-number of edge ideals.
\emph{J. Combin. Theory Ser. A} \textbf{177} (2021), 105310.


\bibitem{MMV12}
J.~Mart\'inez-Bernal, S.~Morey, and R.~H.~Villarreal,
Associated primes of powers of edge ideals.
\emph{Collect. Math.} \textbf{63} (2012), no.~3, 361--374.

\bibitem{Susan}
S.~Morey,
Depths of powers of the edge ideal of a tree.
\emph{Comm. Algebra} \textbf{38} (2010), no.~11, 4042--4055.

\bibitem{Mor+26}
S.~Moradi and L.~Sharifan,
On homological invariants and Cohen--Macaulayness of closed neighborhood ideals.
2026, \arxiv{2602.07910}

\bibitem{FMRS18}
W.~F. Moore, M.~Rogers, and K.~Sather-Wagstaff,
\emph{Monomial Ideals and Their Decompositions}.
Universitext, Springer, Cham, 2018.

\bibitem{N1}
M.~Nasernejad, A.~A.~Qureshi, S.~Bandari, and A.~Musapa{\c{s}}ao{\u{g}}lu,
Dominating ideals and closed neighborhood ideals of graphs.
\emph{Mediterr. J. Math.} \textbf{19} (2022), Article~152.

\bibitem{Nas14}
M.~Nasernejad,
Asymptotic behavior of associated primes of monomial ideals with combinatorial applications.
\emph{J. Algebra Relat. Top.} \textbf{2} (2014), no.~1, 15--25.

\bibitem{P24}
M.~Nasernejad,
Persistence property for some classes of monomial ideals of a polynomial ring.
\emph{J. Algebra Appl.} \textbf{16} (2017), no.~6, 1750105.

\bibitem{Nas+22}
M.~Nasernejad, A.~A.~Qureshi, S.~Bandari, and A.~Musap{\c{s}}ao\u{g}lu,
Dominating ideals and closed neighborhood ideals of graphs.
\emph{Mediterr. J. Math.} \textbf{19} (2022), 1--18.

\bibitem{Nas+25}
M.~Nasernejad, S.~Bandari, and L.~G.~Roberts,
Normality and associated primes of closed neighborhood ideals and dominating ideals.
\emph{J. Algebra Appl.} \textbf{24} (2025), no.~1, 2550009.

\bibitem{Olt+26}
A.~Olteanu and O.~Olteanu,
On the closed neighborhood ideal of the square of the path graph.
2026, \arxiv{2602.14163}

\bibitem{RNA18}
S.~Rajaee, M.~Nasernejad, and I.~Al-Ayyoub,
Superficial ideals for monomial ideals.
\emph{J. Algebra Appl.} \textbf{17} (2018), no.~6, 1850102.

\bibitem{MCf}
A.~Rauf,
Depth and Stanley depth of multigraded modules.
\emph{Comm. Algebra} \textbf{38} (2010), no.~2, 773--784.

\bibitem{RT17}
E.~Reyes and J.~Toledo,
On the strong persistence property for monomial ideals.
\emph{Bull. Math. Soc. Sci. Math. Roumanie (N.S.)}
\textbf{60} (2017), no.~3, 293--305.

\bibitem{Sha+20}
L.~Sharifan and S.~Moradi,
Closed neighborhood ideal of a graph.
\emph{Rocky Mountain J. Math.} \textbf{50} (2020), no.~3, 1097--1107.

\bibitem{SVV}
A.~Simis, W.~V.~Vasconcelos, and R.~H.~Villarreal,
On the ideal theory of graphs.
\emph{J. Algebra} \textbf{167} (1994), no.~2, 389--416.

\bibitem{WVb}
W.~V. Vasconcelos,
\emph{Arithmetic of Blowup Algebras}.
London Math. Soc. Lecture Note Ser. 195,
Cambridge University Press, Cambridge, 1994.

\bibitem{Zhu16}
G.~Zhu,
Depth and Stanley depth of the path ideal associated to an $n$-cyclic graph.
\emph{Turkish J. Math.} \textbf{41} (2017), no.~5, 1240--1254.

\end{thebibliography}
\end{document}